\DeclareMathOperator{\res}{res}
\DeclareMathOperator{\tr}{tr}
\newcommand{\sm}{\wedge}
\newcommand{\iso}{\cong} 
\renewcommand{\to}{\longrightarrow}
\newcommand{\bs}{\backslash} 
\newcommand{\mC}{\mathbb C}
\newcommand{\bMU}{\mathbf{MU}}
\newcommand{\xra}{\xrightarrow}
\numberwithin{equation}{section}
\newtheorem{thm}[equation]{Theorem}
\newtheorem{prop}[equation]{Proposition}
\theoremstyle{definition}
\newtheorem{defn}[equation]{Definition}
\newtheorem{rk}[equation]{Remark}
\newtheorem{eg}[equation]{Example}
\newtheorem{con}[equation]{Construction}
\begin{document}

\title{Chern classes in equivariant bordism}

\author{Stefan Schwede}
\address{Mathematisches Institut, Universit\"at Bonn, Germany}
\email{schwede@math.uni-bonn.de}

\begin{abstract} We introduce Chern classes in $U(m)$-equivariant homotopical bordism
  that refine the Conner-Floyd--Chern classes in the $\bMU$-cohomology of $B U(m)$.
  For products of unitary groups, our Chern classes form regular sequences
  that generate the augmentation ideal of the equivariant bordism rings.
  Consequently, the Greenlees--May local homology spectral sequence collapses for products of unitary groups.
  We use the Chern classes to reprove the $\bMU$-completion theorem of Greenlees--May and La Vecchia.\medskip\\2020 MSC: 55N22, 57R85; 55N91, 55P91
\end{abstract}

\maketitle

\section*{Introduction}

Complex cobordism $\bMU$ is arguably the most important cohomology theory in algebraic topology.
It represents the bordism theory of stably almost complex manifolds,
and it is the universal complex oriented cohomology theory;
via Quillen's celebrated theorem \cite{quillen:formal_group},
$\bMU$ is the entry gate for the theory of formal group laws into
stable homotopy theory, and thus the cornerstone of chromatic stable homotopy theory.

Tom Dieck's homotopical equivariant bordism $\bMU_G$ \cite{tomDieck:bordism_integrality},
defined with the help of equivariant Thom spaces,
strives to be the legitimate equivariant refinement of complex cobordism,
for compact Lie groups $G$.
The theory $\bMU_G$ is the universal equivariantly complex oriented theory;
and for abelian compact Lie groups, the coefficient ring $\bMU_G^*$ carries the universal
$G$-equivariant formal group law \cite{hausmann:group_law}.
Homotopical equivariant bordism receives a homomorphism
from the geometrically defined equivariant bordism theory; due to the lack
of equivariant transversality, this homomorphism is {\em not} an isomorphism for nontrivial groups.
In general, the equivariant bordism ring $\bMU^*_G$
is still largely mysterious; the purpose of this paper is to elucidate its structure
for unitary groups, and for products of unitary groups.

Chern classes are important characteristic classes for complex vector bundles
that were originally introduced in singular cohomology.
Conner and Floyd \cite[Corollary 8.3]{conner-floyd:relation_cobordism}
constructed Chern classes for complex vector bundles in complex cobordism;
in the universal cases, these yield classes $c_k\in \bMU^{2 k}(B U(m))$
that are nowadays referred to as Conner--Floyd--Chern classes. 
Conner and Floyd's construction works in much the same way for any complex oriented cohomology theory,
see \cite[Part II, Lemma 4.3]{adams:stable_homotopy};
in singular cohomology, it reduces to the classical Chern classes.
The purpose of this note is to define and study Chern classes
in $U(m)$-equivariant homotopical bordism $\bMU^*_{U(m)}$
that map to the Conner--Floyd--Chern classes
under tom Dieck's bundling homomorphism \cite[Proposition 1.2]{tomDieck:bordism_integrality}.
Our classes satisfy the analogous formal properties as their classical counterparts,
including the equivariant refinement of the Whitney sum formula, see Theorem \ref{thm:CFC main}.
Despite the many formal similarities, there are crucial qualitative differences
compared to Chern classes in complex oriented cohomology theories:
Our Chern classes are {\em not} characterized by their restriction to the maximal torus,
and some of our Chern classes are zero-divisors, see Remark \ref{rk:torus_restriction}.

We will use our Chern classes and the splitting of \cite{schwede:split BU}
to prove new structure results  about the equivariant bordism rings $\bMU^*_{U(m)}$
for unitary groups, or more generally for products of unitary groups.
To put this into context, we recall that in the special case when $G$ is an {\em abelian} compact Lie group,
the graded ring $\bMU^*_G$ is concentrated in even degrees and free as a module
over the nonequivariant cobordism ring $\bMU^*$ \cite[Theorem 5.3]{comezana}, \cite{loeffler:equivariant},
and the bundling homomorphism $\bMU^*_G\to \bMU^*(B G)$ is completion
at the augmentation ideal of $\bMU^*_G$ \cite[Theorem 1.1]{comezana-may}, \cite{loeffler:bordismengruppen}.
For nonabelian compact Lie groups $G$, however, the equivariant bordism rings $\bMU^*_G$
are still largely mysterious.
\newpage

The main result of this note is the following:\smallskip

{\bf Theorem.} {\em Let $m\geq 1$ be a natural number.
  \begin{enumerate}[\em (i)]
  \item
    The sequence of Chern classes $c_m^{(m)},c_{m-1}^{(m)},\dots,c_1^{(m)}$
    is a regular sequence that generates the augmentation ideal of the graded-commutative ring $\bMU^*_{U(m)}$.
 \item
   The completion of $\bMU_{U(m)}^*$ at the augmentation ideal
   is a graded $\bMU^*$-power series algebra in the above Chern classes.
 \item
    The bundling homomorphism $\bMU_{U(m)}^*\to \bMU^*(B U(m))$ extends to an isomorphism
    \[ ( \bMU_{U(m)}^*)^\wedge_I \ \to \ \bMU^*(BU(m)) \]
    from the completion at the augmentation ideal.
  \end{enumerate}
}

We prove this result as a special case of Theorem \ref{thm:completions} below;
the more general version applies to products of unitary groups.
As we explain in Remark \ref{rk:degenerate}, the regularity of the Chern classes
also implies that the Greenlees--May local homology spectral sequence
converging to $\bMU^*(BU(m))$ degenerates
because the relevant local homology groups vanish in positive degrees.
As another application we use the Chern classes in equivariant bordism
to give a reformulation and self-contained proof of work of Greenlees--May \cite{greenlees-may:completion}
and La Vecchia \cite{lavecchia} on the completion theorem for $\bMU_G$,
see Theorem \ref{thm:completion}.

\section{Equivariant \texorpdfstring{$\bMU$}{MU}-Chern classes}

In this section we introduce the Chern classes in $U(m)$-equivariant homotopical bordism,
see Definition \ref{def:CFC}. We establish their basic properties
in Theorem \ref{thm:CFC main}, including a Whitney sum formula and the fact that the bundling homomorphism
takes our Chern classes to the Conner--Floyd--Chern classes in $\bMU$-cohomology.

We begin by fixing our notation.
For a compact Lie group $G$, we write $\bMU_G$ for the $G$-equivariant homotopical bordism spectrum
introduced by tom Dieck \cite{tomDieck:bordism_integrality}.
For our purposes, it is highly relevant that the theories $\bMU_G$ for varying compact Lie groups $G$ assemble
into a global stable homotopy type, see \cite[Example 6.1.53]{schwede:global}.
For an integer $n$, we write $\bMU_G^n=\pi_{-n}^G(\bMU)$ for the $G$-equivariant coefficient group
in cohomological degree $n$.

Since $\bMU$ comes with the structure of a global ring spectrum, it supports
graded-commutative multiplications on $\bMU_G^*$, as well as external multiplication pairings
\[ \times \ : \ \bMU_G^k\times \bMU_K^l \ \to \ \bMU_{G\times K}^{k+l} \]
for all pairs of compact Lie groups $G$ and $K$.
We write $\nu_k$ for the tautological representation
of the unitary group $U(k)$ on $\mC^k$; we denote its Euler class by
\[ e_k \ = \  e(\nu_k) \ \in \ \bMU^{2 k}_{U(k)}\ ,\]
compare \cite[page 347]{tomDieck:bordism_integrality}.
We write $U(k,m-k)$ for the block subgroup of $U(m)$ consisting of matrices of the form
$(\begin{smallmatrix}A & 0 \\ 0 & B \end{smallmatrix})$
for $(A,B)\in U(k)\times U(m-k)$.
We write $\tr_{U(k,m-k)}^{U(m)}:\bMU_{U(k,m-k)}^*\to\bMU_{U(m)}^*$
for the transfer associated to the inclusion $U(k,m-k)\to U(m)$,
see for example \cite[Construction 3.2.22]{schwede:global}.

\begin{defn}\label{def:CFC}
  For $0\leq k\leq m$, the {\em $k$-th Chern class}
  in equivariant complex bordism is the class
  \[ c_k^{(m)} \ = \ \tr_{U(k,m-k)}^{U(m)}(e_k\times 1_{m-k})\ \in \ \bMU^{2 k}_{U(m)}\ , \]
 where $1_{m-k}\in\bMU_{U(m-k)}^0$ is the multiplicative unit. We also set $c_k^{(m)} =0$ for $k>m$.
\end{defn}

In the extreme cases $k=0$ and $k=m$, we recover familiar classes:
Since $e_0$ is the multiplicative unit in the nonequivariant cobordism ring $\bMU^*$,
the class $c_0^{(m)}=1_m$ is the multiplicative unit in $\bMU_{U(m)}^0$. 
In the other extreme, $c_m^{(m)}=e_m=e(\nu_m)$ is the Euler class of
the tautological $U(m)$-representation.
As we will show in Theorem \ref{thm:CFC main} (ii), the classes $c_k^{(m)}$
are compatible in $m$ under restriction to smaller unitary groups.

\begin{rk}\label{rk:torus_restriction}
  We alert the reader that the restriction homomorphism 
  \[ \res^{U(m)}_{T^m}\ :\ \bMU^*_{U(m)}\ \to \ \bMU^*_{T^m} \]
  is not injective for $m\geq 2$, where $T^m$ denotes a  maximal torus in $U(m)$.
  So the Chern classes in $\bMU^*_{U(m)}$ are not characterized by their restrictions
  to the maximal torus -- in contrast to the nonequivariant situation for complex oriented cohomology theories.
  To show this we let $N$ denote the maximal torus normalizer inside $U(m)$. The class
  \[  1- \tr_N^{U(m)}(1) \ \in \ \bMU^0_{U(m)} \]
  has infinite order because the $U(m)$-geometric fixed point map
  takes it to the multiplicative unit; in particular, this class is nonzero.
  The double coset formula \cite[IV Corollary 6.7 (i)]{lms}
  \[ \res^{U(m)}_{T^m}(\tr_N^{U(m)}(1))\ = \ \res^N_{T^m}(1)\ = \ 1  \]
  implies that the class $ 1- \tr_N^{U(m)}(1)$ lies in the kernel of the restriction homomorphism
  $\res^{U(m)}_{T^m}:\bMU^0_{U(m)}\to \bMU^0_{T^m}$.

  Moreover, the Chern class $c_1^{(2)}$ is a zero-divisor in the ring $\bMU^*_{U(2)}$,
  also in stark contrast to Chern classes in complex oriented cohomology theories.
  Indeed, reciprocity for restriction and transfers \cite[Corollary 3.5.17 (v)]{schwede:global}
  yields the relation
  \begin{align*}
    c_1^{(2)}\cdot (1-\tr_N^{U(2)}(1)) \
    &= \ \tr_{U(1,1)}^{U(2)}(e_1\times 1)\cdot (1-\tr_N^{U(2)}(1)) \\
    &= \ 
    \tr_{U(1,1)}^{U(2)}((e_1\times 1)\cdot \res^{U(2)}_{U(1,1)}(1-\tr_N^{U(2)}(1))) \ = \ 0 \ .   
  \end{align*}
  One can also show that the class $1-\tr_N^{U(2)}(1)$ is infinitely divisible by the Euler class $e_2=c_2^{(2)}$;
  so it is also in the kernel of the completion map at the ideal $(e_2)$.
 \end{rk}

The Chern class  $c_k^{(m)}$ is defined as a transfer; so identifying its restriction
to a subgroup of $U(m)$ involves a double coset formula.
The following double coset formula will take care of all cases we need in this paper;
it ought to be well-known to experts, but I do not know a reference.
The case $l=1$ is established in \cite[Lemma 4.2]{symonds-splitting},
see also \cite[Example 3.4.13]{schwede:global}.
The double coset space $U(i,j)\bs U(m)/ U(k,l)$ is discussed at various places in the
literature, for example \cite[Example 3]{matsuki:double_coset},
but I have not seen the resulting double coset formula spelled out.

\begin{prop}[Double coset formula]\label{prop:double coset}
  Let $i,j,k,l$ be positive natural numbers such that $i+j=k+l$. Then
  \[ \res^{U(i+j)}_{U(i,j)}\circ\tr_{U(k,l)}^{U(k+l)} \ =
    \sum_{0,k-j\leq d\leq i,k}\, \tr_{U(d,i-d,k-d,j-k+d)}^{U(i,j)}\circ\gamma_d^*\circ \res^{U(k,l)}_{U(d,k-d,i-d,l-i+d)}\ ,\]
  where $\gamma_d\in U(i+j)$ is the permutation matrix of the shuffle permutation $\chi_d\in\Sigma_{i+j}$
  given by
\[ \chi_d(a) \ = \
  \begin{cases}
    a & \text{ for $1\leq a\leq d$,}\\
    a-d+i& \text{ for $d+1\leq a\leq k$,}\\
    a+d-k& \text{ for $k+1\leq a\leq k+i-d$, and}\\
    a & \text{ for $a > k+i-d$.}
  \end{cases}
\]
\end{prop}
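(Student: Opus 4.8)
The plan is to read off both sides of the identity from the $H$-orbit geometry of the homogeneous space $G/K$, where I abbreviate $G=U(m)$, $H=U(i,j)$ and $K=U(k,l)$ with $m=i+j=k+l$. First I would identify $G/K$ with the Grassmannian $\mathrm{Gr}_k(\mC^m)$ of $k$-planes, on which $H=U(i)\times U(j)$ acts through the orthogonal decomposition $\mC^m=\mC^i\oplus\mC^j=A\oplus B$; under this identification the base coset $eK$ corresponds to the standard plane $\langle e_1,\dots,e_k\rangle$. The composite $\res^G_H\circ\tr^G_K$ is governed by this $H$-action via the general double coset formula for compact Lie groups, whose case $l=1$ appears in \cite[Lemma 4.2]{symonds-splitting} and \cite[Example 3.4.13]{schwede:global}. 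The one structural input I would invoke is that an $H$-orbit type $H/L$ contributes to such a composite only when its internal Euler characteristic $\chi(H/L)$ is nonzero, equivalently---since $H$ is compact and connected---only when $L$ has full rank $\mathrm{rank}(L)=\mathrm{rank}(H)=i+j$.

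Next I would classify the $H$-orbits on $\mathrm{Gr}_k(\mC^m)$. To a $k$-plane $W$ I attach the integers $d=\dim(W\cap A)$ and $e=\dim(W\cap B)$; the orbit of $W$ is then determined by the pair $(d,e)$ together with the principal angles of the generic part of $W$, a family of continuous moduli whose dimension grows with $p:=k-d-e$. A direct computation of isotropy groups shows that whenever $p>0$ the stabilizer $L$ drops in rank by exactly $p$, since each principal direction is preserved only up to a residual diagonal circle; thus $\mathrm{rank}(L)=i+j-p<i+j$ and $\chi(H/L)=0$. Hence all of these positive-dimensional families drop out, and the only surviving orbits are those with $p=0$, that is $W=(W\cap A)\oplus(W\cap B)$ with $e=k-d$. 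Such a $W$ is an orthogonal sum $W_d=P\oplus Q$ of a $d$-plane $P\subseteq A$ and a $(k-d)$-plane $Q\subseteq B$; these form a single isolated $H$-orbit for each $d$, and the constraints $0\leq d\leq i$ and $0\leq k-d\leq j$ reproduce exactly the summation range $\max(0,k-j)\leq d\leq\min(i,k)$.

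It then remains to identify, for each such $d$, the three pieces of data in the asserted formula. The stabilizer in $H$ of $W_d=P\oplus Q$ is $(U(P)\times U(P^\perp\cap A))\times(U(Q)\times U(Q^\perp\cap B))=U(d,i-d,k-d,j-k+d)$, which yields the outer transfer. Choosing $P$ and $Q$ spanned by standard basis vectors exhibits $W_d$ as $\gamma_d\cdot\langle e_1,\dots,e_k\rangle$ for the shuffle permutation matrix $\gamma_d$, and conjugation by $\gamma_d$ carries the corresponding isotropy subgroup of $K$, namely $U(d,k-d,i-d,l-i+d)$, onto $U(d,i-d,k-d,j-k+d)$: indeed $\chi_d$ fixes the two outer blocks of sizes $d$ and $l-i+d$ and interchanges the two middle blocks of sizes $k-d$ and $i-d$, which is precisely the reordering relating the two block subgroups (using $l-i+d=j-k+d$). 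This produces the inner restriction together with the conjugation map $\gamma_d^*$, and assembling the contributions over all admissible $d$ gives the stated identity.

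The part I expect to be the main obstacle is the vanishing of the positive-dimensional strata in the second step. The double coset space $H\bs G/K$ is genuinely positive-dimensional---already $U(1,1)\bs U(2)/U(1,1)$ is an arc---so no naive finite sum over representatives is available, and one must instead pin down the precise Euler characteristic bookkeeping in the compact Lie group double coset formula and verify the rank drop of the isotropy groups along the principal-angle families. By contrast, once the surviving orbits have been isolated, the combinatorial verification that $\chi_d$ implements the block interchange and the identification of the four-block stabilizers are routine.
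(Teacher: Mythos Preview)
Your overall strategy---identify $G/K$ with the Grassmannian, analyse the $H$-orbits, and show that only the ``split'' planes $W=(W\cap A)\oplus(W\cap B)$ survive---matches the paper, and your identification of the surviving orbits, their four-block stabilizers, and the shuffle $\gamma_d$ is correct.

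The gap is in your vanishing step. The ``internal Euler characteristic'' that appears as a coefficient in the compact Lie group double coset formula is the Euler characteristic of the orbit-type component in the double coset space $H\backslash G/K$, \emph{not} $\chi(H/L)$; these are different objects, and the Hopf--Samelson criterion $\chi(H/L)\ne 0\Leftrightarrow\mathrm{rank}(L)=\mathrm{rank}(H)$ is not what controls the contribution. The actual mechanism by which terms drop out is that the transfer $\tr_L^H$ vanishes whenever the Weyl group $N_H(L)/L$ is infinite, and rank deficiency does not imply this in general: the diagonal $SU(2)$ in $SU(2)\times SU(2)$ has rank $1<2$ but Weyl group of order $2$. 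In your principal-angle picture the ``missing'' torus directions (the anti-diagonal $U(1)^p$) do not normalize the stabiliser---they move the plane $W$---so the rank drop by itself does not exhibit an infinite Weyl group.

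The paper sidesteps the principal-angle stratification entirely by using the center $Z\cong U(1)\times U(1)$ of $H=U(i,j)$. Since $Z$ is central it normalizes every stabiliser $S_A=U(i,j)\cap{}^A U(k,l)$; hence if $Z\not\subseteq S_A$ then $Z/(Z\cap S_A)$ is a positive-dimensional subgroup of $N_H(S_A)/S_A$, the Weyl group is infinite, and that summand vanishes. One then checks directly, using the central element $\left(\begin{smallmatrix}-E_i&0\\0&E_j\end{smallmatrix}\right)$, that $Z\subseteq S_A$ forces the $k$-plane $A\cdot(\mC^k\oplus 0)$ to decompose along $\mC^i\oplus\mC^j$, recovering exactly your split planes indexed by $d$. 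This is shorter than the orbit-by-orbit rank computation and supplies the missing justification.
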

\begin{proof}
  We refer to \cite[IV Theorem 6.3]{lms} or \cite[Theorem 3.4.9]{schwede:global} for the general
  double coset formula for $\res^G_K\circ\tr_H^G$ for two closed subgroups $H$ and $K$
  of a compact Lie group $G$; we need to specialize it to the situation at hand.
  We first consider a matrix $A\in U(m)$ such that the center $Z$ of $U(i,j)$
  is {\em not} contained in the $U(i,j)$-stabilizer 
  \[ S_A\ = \ U(i,j)\cap {^A U(k,l)} \]
  of the coset $A\cdot U(k,l)$.
  Then $S_A\cap Z$ is a proper subgroup of the center $Z$ of $U(i,j)$, which is isomorphic
  to $U(1)\times U(1)$. So $S_A\cap Z$ has strictly smaller dimension than $Z$.
  Since the center of $U(i,j)$ is contained in the normalizer of $S_A$,
  we conclude that the group $S_A$ has an infinite Weyl group inside $U(i,j)$.
  All summands in the double coset formula indexed by such points then involve transfers with infinite Weyl groups,
  and hence they vanish.
  
  So all nontrivial contributions to the double coset formula
  stem from double cosets $U(i,j)\cdot A\cdot U(k,l)$ such that $S_A$ contains the center of $U(i,j)$.
  In particular, the matrix
  $ \left(  \begin{smallmatrix}  - E_i & 0 \\ 0 & E_j  \end{smallmatrix} \right)$ then belongs to $S_A$.
  We write $L=A\cdot (\mC^k\oplus 0^l)$, a complex $k$-plane in $\mC^{k+l}$;
  we consider $x\in\mC^i$ and $y\in\mC^j$ such that $(x,y)\in L$.
  Because $ \left(  \begin{smallmatrix}  - E_i & 0 \\ 0 & E_j  \end{smallmatrix} \right)\cdot L=L$,
  we deduce that $(-x,y)\in L$. Since $(x,y)$ and $(-x,y)$ belong to $L$, so do the vectors $(x,0)$ and $(y,0)$.
  We have thus shown that the $k$-plane $L=A\cdot(\mC^k\oplus 0^l)$ is spanned by the intersections
  \[   L\cap (\mC^i\oplus 0^j)   \text{\qquad and\qquad}  L\cap (0^i\oplus\mC^j)\ .  \]  
  We organize the cosets with this property by the dimension of the first intersection:
  we define $M_d$ as the closed subspace of $U(m)/U(k,l)$
  consisting of those cosets $A\cdot U(k,l)$ such that
  \[  \dim_\mC( L\cap (\mC^i\oplus 0^j))\  = \ d
    \text{\qquad and\qquad}
    \dim_\mC( L\cap (0^i\oplus\mC^j))\  = \ k-d\ .  \]
  If $M_d$ is nonempty, we must have $0, k-j\leq d\leq i,k$.
  The group $U(i,j)$ acts transitively on $M_d$, and the coset $\gamma_d\cdot U(k,l)$  belongs to $M_d$;
  so $M_d$ is the $U(i,j)$-orbit type manifold of $U(m)/U(k,l)$ for the conjugacy class of
  \[   S_{\gamma_d}\ =  \ U(i,j)\cap {^{\gamma_d} U(k,l)} \ = \ U(d,i-d,k-d,j-k+d)\ . \]
  The corresponding orbit space $U(i,j)\backslash M_d=U(i,j)\cdot\gamma_d\cdot U(k,l)$
  is a single point inside the double coset space,  so its internal Euler characteristic is 1.
  This orbit type thus contributes the summand
  \[ \tr_{U(d,i-d,k-d,j-k+d)}^{U(i,j)}\circ\gamma_d^*\circ \res^{U(k,l)}_{U(d,k-d,i-d,l-i+d)} \]
  to the double coset formula.
\end{proof}

In \cite[Corollary 8.3]{conner-floyd:relation_cobordism},
Conner and Floyd define Chern classes for complex vector bundles
in the nonequivariant $\bMU$-cohomology rings.
In the universal cases, these yield classes $c_k\in \bMU^{2 k}(B U(m))$
that are nowadays referred to as Conner--Floyd--Chern classes. 
The next theorem spells out the key properties of our Chern classes $c_k^{(m)}$;
parts (i), (ii) and (iii) roughly say that all the familiar structural properties
of the Conner--Floyd--Chern classes in $\bMU^*(B U(m))$
already hold for our Chern classes in $U(m)$-equivariant $\bMU$-theory.
Part (iv) of the theorem refers to the bundling maps $\bMU_G^*\to \bMU^*(B G)$
defined by tom Dieck in \cite[Proposition 1.2]{tomDieck:bordism_integrality}.

\begin{thm}\label{thm:CFC main} The Chern classes in homotopical equivariant bordism enjoy the following properties.
  \begin{enumerate}[\em (i)]
  \item For all $0\leq k\leq m=i+j$, the relation
    \[ \res^{U(m)}_{U(i,j)}(c_k^{(m)})\ = \ \sum_{d=0,\dots,k} c_d^{(i)}\times c_{k-d}^{(j)}\]
    holds in the group $\bMU_{U(i,j)}^{2 k}$.
  \item The relation
    \[ \res^{U(m)}_{U(m-1)}(c_k^{(m)})\ = \
      \begin{cases}
        c_k^{(m-1)}  & \text{ for $0\leq k\leq m-1$, and}\\
      \  0  & \text{ for $k=m$}
      \end{cases}\]
    holds in the group $\bMU_{U(m-1)}^{2 k}$.
\item Let $T^m$ denote the diagonal maximal torus of $U(m)$. Then the restriction homomorphism
    \[ \res^{U(m)}_{T^m} \ : \ \bMU_{U(m)}^{2 k} \ \to \ \bMU^{2 k}_{T^m} \]
    takes the class $c_k^{(m)}$ to the $k$-th elementary symmetric polynomial
    in the classes $p_1^*(e_1)$, \dots, $p_m^*(e_1)$,
    where $p_i:T^m\to T=U(1)$ is the projection to the $i$-th factor.
  \item The bundling map
    \[ \bMU_{U(m)}^* \ \to \ \bMU^*(BU(m)) \]
    takes the class $c_k^{(m)}$ to the $k$-th Conner--Floyd--Chern class.
  \end{enumerate}
\end{thm}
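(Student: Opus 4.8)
The plan is to establish the four parts in the order (i), (ii), (iii), (iv), since each builds naturally on the double coset formula and on its predecessors. Let me think about how each would go.

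Part (i) is the Whitney sum formula, and this is where the Double coset formula (Proposition \ref{prop:double coset}) does the heavy lifting. The class $c_k^{(m)}$ is the transfer of $e_k \times 1_{m-k}$ along $U(k,m-k) \to U(m)$. To compute $\res^{U(m)}_{U(i,j)}(c_k^{(m)})$, I would apply the double coset formula with the source transfer subgroup being $U(k, m-k)$ and the restriction subgroup being $U(i,j)$. This expresses the restriction as a sum over $d$ of transfers (along $U(d, i-d, k-d, j-k+d) \to U(i,j)$) of shuffled restrictions of $e_k \times 1_{m-k}$. The key computation is to identify each summand: restricting $e_k \times 1_{m-k}$ down to the four-block subgroup and applying the shuffle $\gamma_d^*$ should produce $(e_d \times 1_{i-d}) \times (e_{k-d} \times 1_{j-k+d})$ up to reorganizing the blocks, because the Euler class $e_k$ of the tautological representation restricts to a product of Euler classes on a block subgroup (Euler classes are multiplicative for direct sums of representations). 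Then reassembling the transfer along the composite $U(d,i-d,k-d,j-k+d) \to U(d,i-d) \times U(k-d, j-k+d) \to U(i) \times U(j) = U(i,j)$ and using transitivity/multiplicativity of transfers should yield exactly $c_d^{(i)} \times c_{k-d}^{(j)}$. The main obstacle will be tracking the shuffle permutation $\gamma_d$ and the block reorderings carefully enough to see that the external product structure comes out correctly; this is a bookkeeping-intensive but conceptually clear computation.

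Part (ii) should follow from part (i) essentially by specialization. Taking $j = 1$ (so $i = m-1$) gives $\res^{U(m)}_{U(m-1,1)}(c_k^{(m)}) = \sum_d c_d^{(m-1)} \times c_{k-d}^{(1)}$. Since $c_0^{(1)} = 1$ and $c_1^{(1)} = e_1$ while $c_s^{(1)} = 0$ for $s \geq 2$, only the terms $d = k$ and $d = k-1$ survive. I would then further restrict from $U(m-1,1)$ to $U(m-1) = U(m-1) \times \{1\}$; the external factor $c_0^{(1)} = 1$ restricts to the unit and $c_1^{(1)} = e_1$ restricts along $U(1) \to \{1\}$ to zero (the Euler class of a representation of the trivial group on a nonzero space, i.e. the restriction of $e_1$ to the trivial subgroup, vanishes). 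This kills the $d = k-1$ term and leaves $c_k^{(m-1)}$ for $k \leq m-1$, and $0$ when $k = m$ since $c_m^{(m-1)} = 0$ by convention.

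Part (iii) is another specialization: restricting all the way to the maximal torus $T^m$, I would iterate part (i) (or restrict the block formula down to the fully split torus). Since $c_1^{(1)} = e_1 = e(\nu_1)$ and $p_i^*(e_1)$ is the Euler class of the $i$-th coordinate representation, the iterated Whitney formula expresses $\res^{U(m)}_{T^m}(c_k^{(m)})$ as the sum over all ways of choosing $k$ of the $m$ torus factors to contribute an $e_1$ and the rest a $1$ — which is precisely the $k$-th elementary symmetric polynomial in $p_1^*(e_1), \dots, p_m^*(e_1)$. For part (iv), the bundling map $\bMU_{U(m)}^* \to \bMU^*(BU(m))$ is a ring homomorphism compatible with restriction, transfer, and Euler classes, so it sends the equivariant Euler class $e_k$ to the top Conner-Floyd-Chern class of the universal bundle and commutes with the transfer defining $c_k^{(m)}$. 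I would argue that the image classes satisfy the same Whitney sum formula and the same restriction-to-the-maximal-torus description as the Conner-Floyd-Chern classes, and then invoke the splitting principle for $\bMU^*(BU(m))$: since $\bMU^*(BT^m) \to$ is injective (the torus restriction is injective non-equivariantly, unlike the equivariant case flagged in Remark \ref{rk:torus_restriction}) and both our image and the Conner-Floyd classes restrict to the same elementary symmetric polynomial by part (iii), they must agree. The main obstacle across parts (iii)–(iv) is verifying that the bundling map genuinely intertwines equivariant transfers with the relevant non-equivariant operations so that tom Dieck's construction sends $c_k^{(m)}$ to the Conner-Floyd transfer-free description; once that naturality is in hand, the splitting principle closes the argument.
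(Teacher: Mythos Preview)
Your proposal is correct and follows essentially the same approach as the paper: part (i) via the double coset formula of Proposition~\ref{prop:double coset} together with multiplicativity of Euler classes and transfers, part (ii) by specializing (i) to $j=1$ and restricting further to $U(m-1)$, part (iii) by iterating (i) down to the maximal torus, and part (iv) by the non-equivariant splitting principle combined with part (iii). The only minor deviation is that the paper handles the case $k=m$ of (ii) directly via the vanishing of the Euler class of a representation with a trivial summand, rather than through the convention $c_m^{(m-1)}=0$; and for (iv) the paper uses only naturality of the bundling map with respect to restriction (so your worry about compatibility with transfers is unnecessary).
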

\begin{proof}
  (i) This property exploits the double coset formula for 
  $\res^{U(m)}_{U(i,j)}\circ\tr_{U(k,m-k)}^{U(m)}$ recorded in Proposition \ref{prop:double coset},
  which is the second equation in the following list:
  \begin{align*}
    \res^{U(m)}_{U(i,j)}(c_k^{(m)})\
    &= \ \res^{U(m)}_{U(i,j)}(\tr_{U(k,m-k)}^{U(m)}(e_k\times 1_{m-k}))  \\
    &= \
    \sum_{d=0,\dots,k} \tr_{U(d,i-d,k-d,j-k+d)}^{U(i,j)}(\gamma_d^*(\res^{U(k,m-k)}_{U(d,k-d,i-d,j-k+d)}(e_k\times 1_{m-k})))\\
    &= \
      \sum_{d=0,\dots,k} \tr_{U(d,i-d,k-d,j-k+d)}^{U(i,j)}(\gamma_d^*(e_d\times e_{k-d}\times 1_{i-d}\times 1_{j-k+d}))\\
    &= \
          \sum_{d=0,\dots,k} \tr_{U(d,i-d,k-d,j-k+d)}^{U(i,j)}(e_d\times 1_{i-d}\times e_{k-d}\times 1_{j-k+d})\\
    &= \
      \sum_{d=0,\dots,k} \tr_{U(d,i-d)}^{U(i)}(e_d\times 1_{i-d})\times
\tr_{U(k-d,j-k+d)}^{U(j)}(e_{k-d}\times 1_{j-k+d})\\
    &= \  \sum_{d=0,\dots,k} c_d^{(i)}\times c_{k-d}^{(j)}
  \end{align*}

Part (ii) for $k<m$ follows from part (i) by restriction from $U(m-1,1)$ to $U(m-1)$:
\begin{align*}
  \res^{U(m)}_{U(m-1)}(c_k^{(m)})\
  &= \ \res^{U(m-1,1)}_{U(m-1)}(\res^{U(m)}_{U(m-1,1)}(c_k^{(m)}))\\
  &= \  \res^{U(m-1,1)}_{U(m-1)}(c_{k-1}^{(m-1)}\times c_1^{(1)}\ + \ c_k^{(m-1)}\times c_0^{(1)})\\
    &= \ c_{k-1}^{(m-1)}\times \res^{U(1)}_1(c_1^{(1)})\ +\  c_k^{(m-1)}\times \res^{U(1)}_1(c_0^{(1)})\ = \ c_k^{(m-1)}\ .   
\end{align*}
We have used that the class $c_1^{(1)}=e_1$ is in the kernel of the augmentation
$\res^{U(1)}_1:\bMU_{U(1)}^*\to \bMU^*$. The Euler class $c_m^{(m)}=e(\nu_m)$
restricts to 0 in $\bMU^*_{U(m-1)}$ because the restriction 
of the tautological $U(m)$-representation to $U(m-1)$ splits off a trivial one-dimensional summand.

  (iii) An inductive argument based on property (i) shows the desired relation:
  \begin{align*}
     \res^{U(m)}_{T^m}(c_k^{(m)}) \ &= \
    \res^{U(m)}_{U(1,\dots,1)}(c_k^{(m)}) \\
    &= \ \sum_{A\subset\{1,\dots,m\}, |A|=k}\quad \prod_{a\in A} p_a^*(c_1^{(1)})\cdot\prod_{b\not\in A}p_b^*(c_0^{(1)}) \\
    &= \ \sum_{A\subset\{1,\dots,m\}, |A|=k} \quad \prod_{a\in A} p_a^*(e_1)\ .
  \end{align*}

(iv) 
  As before we let $T^m$ denote the diagonal maximal torus in $U(m)$.
  The splitting principle holds for nonequivariant complex oriented cohomology theories,
  see for example \cite[Proposition 8.10]{dold:Chern_classes}.
  In other words, the right vertical map in the commutative square of graded rings is injective:
  \[ \xymatrix{ \bMU^*_{U(m)}\ar[r]\ar[d]_{\res^{U(m)}_{T^m}} & \bMU^*(B U(m))\ar[d]^-{ (B i)^*} \\
      \bMU^*_{T^m}\ar[r] & \bMU^*(B T^m) \ar@{=}[r] & \bMU^*[[p_1^*(e_1),\dots,p_m^*(e_1)]]
    } \]
  The $k$-th Conner--Floyd--Chern class is characterized as the unique element
  of $\bMU^{2 k}(B U(m))$ that maps to
  the $k$-th elementary symmetric polynomial in the classes $p_1^*(e_1),\dots,p_m^*(e_1)$.
  Together with part (iii), this proves the claim.
\end{proof}

\section{Regularity results}

In this section we use the Chern classes to formulate new structural properties of the equivariant
bordism ring $\bMU_{U(m)}^*$. In particular, we can say what $\bMU_{U(m)}^*$
looks like after dividing out some of the Chern classes, and after completing at the Chern classes.
The following theorem states these facts more generally for $U(m)\times G$
instead of $U(m)$; by induction on the number of factors, we can then deduce corresponding results
for products of unitary groups, see Theorem \ref{thm:completions}.
The results in this section make crucial use of the splitting theorem for
global functors established in \cite{schwede:split BU}.

\begin{thm}\label{thm:structure}
  For every compact Lie group $G$ and all $0\leq k\leq m$,
  the sequence of Chern classes\vspace*{-.1cm}
  \[ (c_m^{(m)}\times 1_G,\ c_{m-1}^{(m)}\times 1_G,\dots,\ c_{k+1}^{(m)}\times 1_G)  \]
  is a regular sequence in the graded-commutative ring  $\bMU^*_{U(m)\times G}$
  that generates the kernel of the surjective restriction homomorphism\vspace*{-.1cm}
  \[ \res^{ U(m)\times G}_{U(k)\times G}\ :\ \bMU_{U(m)\times G}^*\ \to\ \bMU_{U(k)\times G}^*\ . \]
  In particular, the sequence of Chern classes $(c_m^{(m)},c_{m-1}^{(m)},\dots,c_1^{(m)})$
  is a regular sequence   that generates the augmentation ideal of the graded-commutative ring  $\bMU^*_{U(m)}$.
\end{thm}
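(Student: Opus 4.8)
The plan is to induct on $m$, peeling off the top Chern class $c_m^{(m)}\times 1_G$ one factor at a time and exploiting the restriction formula of Theorem \ref{thm:CFC main}(ii). The base case $k=m$ is vacuous: the sequence is empty and $\res^{U(m)\times G}_{U(m)\times G}$ is the identity. For the inductive step it suffices to prove the following one-step claim: the restriction $\res^{U(m)\times G}_{U(m-1)\times G}$ is surjective, its kernel is the principal ideal generated by the Euler class $c_m^{(m)}\times 1_G = e_m\times 1_G$, and this Euler class is a non-zero-divisor in $\bMU^*_{U(m)\times G}$.

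To prove the one-step claim I would use the Gysin sequence of the tautological representation. Its unit sphere $S(\nu_m)$ is the homogeneous space $U(m)/U(m-1)$; viewing $\nu_m$ as a $U(m)\times G$-representation with $G$ acting trivially, the cofiber sequence $S(\nu_m)_+ \to S^0 \to S^{\nu_m}$, the Thom isomorphism $\tilde\bMU^*_{U(m)\times G}(S^{\nu_m})\iso\bMU^{*-2m}_{U(m)\times G}$, and the standard identification of the $U(m)\times G$-equivariant bordism of this orbit with $\bMU^*_{U(m-1)\times G}$ together yield a long exact sequence
\[ \cdots \to \bMU^{*-2m}_{U(m)\times G}\xra{\ \cdot\,(e_m\times 1_G)\ }\bMU^*_{U(m)\times G}\xra{\ \res\ }\bMU^*_{U(m-1)\times G}\to\bMU^{*-2m+1}_{U(m)\times G}\to\cdots \]
whose first map is multiplication by the Euler class and whose second map is restriction. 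In this sequence, injectivity of multiplication by $e_m\times 1_G$, vanishing of the connecting homomorphism, and surjectivity of restriction are mutually equivalent; any one of them proves the one-step claim and identifies $\bMU^*_{U(m)\times G}/(e_m\times 1_G)$ with $\bMU^*_{U(m-1)\times G}$.

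The main obstacle is exactly this non-zero-divisor (equivalently, surjectivity) statement, and this is where the splitting theorem of \cite{schwede:split BU} is indispensable. One cannot descend the statement from the maximal torus: although Theorem \ref{thm:CFC main}(iii) identifies the torus restrictions of the Chern classes with elementary symmetric polynomials in a graded polynomial algebra over $\bMU^*$ --- where regularity and the analogous kernel computation are transparent --- the restriction $\res^{U(m)}_{T^m}$ fails to be injective for $m\geq 2$ by Remark \ref{rk:torus_restriction}, so nothing can be concluded downstream. Instead I would feed the global splitting into the Gysin sequence to conclude that $\res^{U(m)\times G}_{U(m-1)\times G}$ is surjective, which forces the connecting maps to vanish and $e_m\times 1_G$ to be a non-zero-divisor.

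Granting the one-step claim, the theorem follows by combining the induction with elementary commutative algebra. Under the isomorphism $\bMU^*_{U(m)\times G}/(e_m\times 1_G)\iso\bMU^*_{U(m-1)\times G}$, each remaining class $c_i^{(m)}\times 1_G$ with $i\leq m-1$ maps to $c_i^{(m-1)}\times 1_G$ by Theorem \ref{thm:CFC main}(ii). The inductive hypothesis for $U(m-1)\times G$ (with the same $k$) makes $c_{m-1}^{(m-1)}\times 1_G,\dots,c_{k+1}^{(m-1)}\times 1_G$ a regular sequence generating the kernel of $\res^{U(m-1)\times G}_{U(k)\times G}$. Since a regular sequence modulo a non-zero-divisor lifts to a regular sequence, and since the kernel of a composite of two surjections is generated by the kernel of the first together with any lifts of generators of the kernel of the second, the full sequence $c_m^{(m)}\times 1_G,\dots,c_{k+1}^{(m)}\times 1_G$ is regular and generates the kernel of the (surjective) composite $\res^{U(m)\times G}_{U(k)\times G}$. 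The final assertion is the special case $k=0$ and $G$ trivial, in which $\res^{U(m)}_{U(0)}$ is the augmentation $\bMU^*_{U(m)}\to\bMU^*$ whose kernel is the augmentation ideal.
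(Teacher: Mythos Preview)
Your proof is correct and takes essentially the same approach as the paper: both hinge on the Gysin sequence of $\nu_j$ combined with the splitting theorem of \cite{schwede:split BU} to show that each restriction $\bMU^*_{U(j)\times G}\to\bMU^*_{U(j-1)\times G}$ is surjective with kernel generated by the non-zero-divisor $e_j\times 1_G$, and both invoke Theorem~\ref{thm:CFC main}(ii) to identify the image of $c_i^{(m)}$ in the quotient. The only difference is organizational---the paper runs a downward induction on $k$ within fixed $m$, working directly in the successive quotients of $\bMU^*_{U(m)\times G}$, whereas you induct upward on $m$ and appeal to the full statement for $U(m-1)\times G$---but the content is identical.
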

\begin{proof}
  We argue by downward induction on $k$. The induction starts with $k=m$, where there is nothing to show.
  Now we assume the claim for some $k\leq m$, and we deduce it for $k-1$.
  The inductive hypothesis shows that
  $c_m^{(m)}\times 1_G,\dots,c_{k+1}^{(m)}\times 1_G$
  is a regular sequence in the graded-commutative ring  $\bMU^*_{U(m)\times G}$,
  and that the restriction homomorphism $\res^{U(m)\times G}_{U(k)\times G}$
  factors through an isomorphism
  \[ \bMU_{U(m)\times G}^*/(c_m^{(m)}\times 1_G,\dots,c_{k+1}^{(m)}\times 1_G)
    \ \iso\ \bMU_{U(k)\times G}^*\ . \]
  We exploit that the various equivariant bordism spectra $\bMU_G$ underlie a global spectrum,
  see \cite[Example 6.1.53]{schwede:global};
  thus the restriction homomorphism $\res^{U(k)\times G}_{U(k-1)\times G}$ is surjective by
  Theorem 1.4 and Proposition 2.2 of \cite{schwede:split BU}.
  Hence the  standard long exact sequence unsplices into
  a short exact sequence of graded $\bMU^*$-modules:\vspace*{-.2cm}
  \[ 0\  \to\ \bMU_{U(k)\times G}^{*-2 k}\  \xra{(e_k\times 1_G)\cdot -\ }\
    \bMU_{U(k)\times G}^* \xra{\res^{U(k)\times G}_{U(k-1)\times G}}\ \bMU_{U(k-1)\times G}^*\ \to\ 0 
  \]
  Because
  \[ \res^{U(m)\times G}_{U(k)\times G}(c_k^{(m)}\times 1_G)\ = \   c_k^{(k)}\times 1_G\ = \ e_k\times 1_G\ , \]
  we conclude that $c_k^{(m)}\times 1_G$ is a non zero-divisor in 
  $\bMU_{U(m)\times G}^*/(c_m^{(m)}\times 1_G,c_{m-1}^{(m)}\times 1_G,\dots,c_{k+1}^{(m)}\times 1_G)$,
  and that additionally dividing out $c_k^{(m)}\times 1_G$ yields $\bMU_{U(k-1)\times G}^*$.
  This completes the inductive step.
\end{proof}

We can now identify the completion of $\bMU^*_{U(m)}$ at the augmentation ideal
as an $\bMU^*$-power series algebra on the Chern classes.
We state this somewhat more generally for products of unitary groups, which we write as
\[ U(m_1,\dots,m_l)\ = \ U(m_1)\times\dots\times U(m_l)\  , \]
for natural numbers $m_1,\dots,m_l\geq 1$.
For $1\leq i\leq l$, we write
\[ p_i\ :\ U(m_1,\dots,m_l)\ \to\ U(m_i)  \]
for the projection to the $i$-th factor, and we set\vspace*{-.2cm}
\[ c^{[i]}_k \ = \ p_i^*(c_k^{(m_i)})\ = \ 1_{U(m_1,\dots,m_{i-1})}\times c_k^{(m_i)}\times 1_{U(m_{i+1},\dots,m_l)}
  \ \in \ \bMU_{U(m_1,\dots,m_l)}^{2 k}\ .\]
The following theorem was previously known for tori, that is, for $m_1=\dots=m_l=1$.

\begin{thm}\label{thm:completions}
  Let $m_1,\dots,m_l\geq 1$ be positive integers.
  \begin{enumerate}[\em (i)]
  \item
    The sequence of Chern classes\vspace*{-.2cm}
    \begin{equation}\label{eq:Chern_for_products}
       c_{m_1}^{[1]},\dots,c_1^{[1]},c_{m_2}^{[2]},\dots,c_1^{[2]},\dots, c_{m_l}^{[l]},\dots,c_1^{[l]}
    \end{equation}
  is a regular sequence that generates the augmentation ideal of the graded-commutative ring  $\bMU^*_{U(m_1,\dots,m_l)}$.
 \item
   The completion of $\bMU_{U(m_1,\dots,m_l)}^*$ at the augmentation ideal
   is a graded $\bMU^*$-power series algebra in the Chern classes \eqref{eq:Chern_for_products}.
 \item
    The bundling map $\bMU_{U(m_1,\dots,m_l)}^*\to \bMU^*(B U(m_1,\dots,m_l))$ extends to an isomorphism
    \[ ( \bMU_{U(m_1,\dots,m_l)}^*)^\wedge_I \ \to \ \bMU^*(BU(m_1,\dots,m_l)) \]
    from the completion at the augmentation ideal.
  \end{enumerate}
\end{thm}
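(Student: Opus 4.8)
The plan is to reduce all three statements to the single-factor case (Theorem \ref{thm:structure}) by induction on the number of factors $l$, exploiting the external product structure on the global spectrum $\bMU$. The key technical input is that regularity and the quotient identification propagate through the product.

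First I would prove part (i) by induction on $l$. The base case $l=1$ is exactly the last sentence of Theorem \ref{thm:structure}. For the inductive step, I would apply Theorem \ref{thm:structure} with the group $G = U(m_2,\dots,m_l)$: this shows that the sequence $c_{m_1}^{[1]},\dots,c_1^{[1]}$ (which are precisely the classes $c_j^{(m_1)}\times 1_G$) is a regular sequence generating the kernel of the restriction homomorphism
\[
  \res^{U(m_1)\times G}_{G}\ :\ \bMU^*_{U(m_1,\dots,m_l)}\ \to\ \bMU^*_{G}\ ,
\]
with the quotient by this regular sequence being $\bMU^*_{U(m_2,\dots,m_l)}$. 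The inductive hypothesis then supplies a regular sequence in $\bMU^*_{G}$ (the remaining Chern classes $c_j^{[i]}$ for $i\geq 2$) generating \emph{its} augmentation ideal. Concatenating a regular sequence with the lift of a regular sequence on the quotient yields a regular sequence on the total ring; since the augmentation ideal of $\bMU^*_{U(m_1,\dots,m_l)}$ is the preimage of the augmentation ideal of $\bMU^*_G$ under $\res^{U(m_1)\times G}_G$ together with the kernel itself, the concatenated sequence generates the full augmentation ideal. The main subtlety here is checking that lifting the second block of Chern classes along the surjection respects regularity; this is a standard fact about regular sequences in a short exact sequence (regularity modulo the first block plus regularity of the first block), so I expect it to go through cleanly.

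For part (ii), once (i) is known, I would observe that the completion of a graded-connected (in the appropriate sense) $\bMU^*$-algebra at an ideal generated by a regular sequence of homogeneous elements of positive degree is the power series algebra over the quotient $\bMU^*_{U(m_1,\dots,m_l)}/I \cong \bMU^*$ on those generators. Concretely, regularity of the sequence means the associated graded of the $I$-adic filtration is a polynomial algebra on the Chern classes over $\bMU^*$, and completing turns this into the power series algebra. The degree bookkeeping (each $c_k^{[i]}$ sits in degree $2k$, so only finitely many monomials occur in each total degree once one completes) ensures the identification is a genuine isomorphism of graded rings.

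Finally, part (iii) combines (ii) with Theorem \ref{thm:CFC main}(iv) and the known structure of $\bMU^*(BU(m_1,\dots,m_l))$. The bundling map sends each $c_k^{[i]}$ to the corresponding Conner-Floyd-Chern class, and $\bMU^*(B U(m_1,\dots,m_l))$ is the power series algebra over $\bMU^*$ on exactly these Conner-Floyd-Chern classes (by the Künneth isomorphism for classifying spaces of the factors together with the classical computation of $\bMU^*(BU(m))$). Since the bundling map kills the augmentation ideal's image appropriately and is continuous for the $I$-adic topology, it factors through the completion; part (ii) identifies the source of this factored map as a power series algebra on the same generators, and matching generators gives the isomorphism. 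I expect the hardest part of the whole argument to be the inductive step in (i)—specifically, phrasing the concatenation-of-regular-sequences argument so that it genuinely uses the quotient identification from Theorem \ref{thm:structure} rather than just abstract regularity, since the Chern classes of different factors a priori interact through the external product and one must confirm that the second block remains regular on the nose after lifting.
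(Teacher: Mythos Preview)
Your approach to part (i) is exactly the paper's: induct on $l$, invoking Theorem \ref{thm:structure} with $G=U(m_2,\dots,m_l)$ to peel off the first factor, then concatenate with the inductive regular sequence on the quotient $\bMU^*_G$. Your worry about lifting is a non-issue, since the classes $c_j^{[i]}$ for $i\geq 2$ already live in $\bMU^*_{U(m_1,\dots,m_l)}$ and restrict to the right elements of $\bMU^*_G$; no choice of lift is involved.

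For parts (ii) and (iii) you take a genuinely different, though equally valid, route. You deduce (ii) directly from (i) via the standard commutative-algebra fact that a regular sequence generating $I$ forces $\mathrm{gr}_I(R)\cong (R/I)[T_1,\dots,T_n]$, hence the completion is the power series ring; then (iii) follows by matching generators under the bundling map. The paper instead proves (ii) and (iii) simultaneously: it shows by induction on $n$ that $I^n/I^{n+1}$ is free over $\bMU^*$ on the degree-$n$ Chern monomials, establishing their linear \emph{independence} not from regularity but by pushing forward along the bundling map into $\bMU^*(BU(m_1,\dots,m_l))$, where the Conner--Floyd--Chern monomials are already known to be independent. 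Your argument is cleaner and more self-contained once (i) is in hand, at the cost of invoking (and checking applicability of) the associated-graded theorem in a graded-commutative ring---which is fine here since all Chern classes sit in even degree and are therefore central. The paper's argument trades that piece of algebra for the non-equivariant computation of $\bMU^*(BU(m))$, which it needs anyway for (iii).
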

\begin{proof}
  Part (i) follows from Theorem \ref{thm:structure} by induction on the number $l$ of factors.

  We prove parts (ii) and (iii) together.
  We must show that for every $n\geq 1$,  $\bMU_{U(m_1,\dots,m_l)}^*/I^n$ is free as an $\bMU^*$-module on the monomials
  of degree less than $n$ in the Chern classes \eqref{eq:Chern_for_products}.
  There is nothing to show for $n=1$.
  The short exact sequence
  \[ 0\ \to\ I^n/I^{n+1}\ \to\ \bMU_{U(m_1,\dots,m_l)}^*/I^{n+1}\ \to\ \bMU_{U(m_1,\dots,m_l)}^*/I^n\ \to\ 0\]
  and the inductive hypothesis reduce the claim to showing that
  $I^n/I^{n+1}$ is free as an $\bMU^*$-module on the monomials of degree exactly $n$ in the Chern classes
  \eqref{eq:Chern_for_products}.
  Since the augmentation ideal $I$ is generated by these Chern classes,
  the $n$-th power $I^n$ is generated, as a module over $\bMU_{U(m_1,\dots,m_l)}^*$, by the monomials
  of degree $n$.
  So $I^n/I^{n+1}$ is generated by these monomials as a module over $\bMU^*$. 
  
  The bundling map $\bMU_{U(m_1,\dots,m_l)}^*\to \bMU^*(B U(m_1,\dots,m_l))$
  is a homomorphism of augmented $\bMU^*$-algebras,
  and it takes the Chern class $c_k^{[i]}$ to the inflation of the $k$-th Conner--Floyd--Chern class
  along the projection to the $i$-th factor.
  By the theory of complex orientations, the collection of these
  Conner--Floyd--Chern classes are $\bMU^*$-power series generators of $\bMU^*(B U(m_1,\dots,m_l))$;
  in particular,
  the images of the Chern class monomials are $\bMU^*$-linearly independent in $\bMU^*(B U(m_1,\dots,m_l))$.
  Hence these classes are themselves linearly independent in $I^n/I^{n+1}$.
\end{proof}

\begin{rk}\label{rk:degenerate}
Greenlees and May \cite[Corollary 1.6]{greenlees-may:completion} construct
a local homology spectral sequence
\[ E_2^{p,q}\ = \ H^I_{-p,-p}(\bMU_G^*)\  \Longrightarrow \ \bMU^{p+q}( B G )\ .\]
The regularity results about Chern classes from Theorem \ref{thm:completions} imply that
whenever $G=U(m_1,\dots,m_l)$ is a product of unitary groups, the $E_2^{p,q}$-term vanishes for all $p\ne 0$,
and the spectral sequence degenerates into the isomorphism
\[ E_2^{0,*}\ = \ (\bMU_{U(m_1,\dots,m_l)}^*)^\wedge_I \  \iso \ \bMU^*( B U(m_1,\dots,m_l)) \]
of Theorem \ref{thm:completions} (iii).
\end{rk}

\begin{rk}
  The previous regularity theorems are special cases of the following more general results
  that hold for every global $\bMU$-module $E$:
    \begin{itemize}
  \item  For every compact Lie group $G$, the sequence of Chern classes 
    $c_m^{(m)}\times 1_G,\dots,c_1^{(m)}\times 1_G$
    acts regularly  on the graded $\bMU^*_{U(m)\times G}$-module $E^*_{U(m)\times G}$.
\item The restriction homomorphism
  \[ \res^{ U(m)\times G}_ G\ :\ E_{U(m)\times G}^*\ \to \ E_G^*\]
  factors through an isomorphism
  \[  E_{U(m)\times G}^*/(c_m^{(m)}\times 1_G,\dots, c_1^{(m)}\times 1_G)\ \iso \ E_G^* \ .\]
  
\item
  For all $m_1,\dots,m_l\geq 1$, the sequence of Chern classes \eqref{eq:Chern_for_products}
  acts regularly on the graded $\bMU^*_{U(m_1,\dots,m_l)}$-module $E^*_{U(m_1,\dots,m_l)}$.
  \end{itemize}
  As in Remark \ref{rk:degenerate}, the regularity properties also imply the degeneracy
  of the Greenlees--May local homology spectral sequence converging to $E^*(B U(m_1,\dots,m_l))$.
\end{rk}

\section{The \texorpdfstring{$\bMU$}{MU}-completion theorem via Chern classes}

In this section we use the Chern classes to reformulate the $\bMU_G$-comple\-tion theorem
of Greenlees--May \cite{greenlees-may:completion} and La Vecchia \cite{lavecchia},
for any compact Lie group $G$, and we give a short and self-contained proof.
We emphasize that the essential arguments of this section are all contained in
\cite{greenlees-may:completion} and \cite{lavecchia};
the Chern classes let us arrange them in a more conceptual and concise way.
The references \cite{greenlees-may:completion, lavecchia}
ask for a finitely generated ideal of $\bMU_G^*$ that is `sufficiently large' in the
sense of \cite[Definition 2.4]{greenlees-may:completion}.
While we have no need to explicitly mention sufficiently large ideals,
the new insight is that the ideal generated by the Chern classes
of any faithful $G$-representation is `sufficiently large'.

\begin{con}[Chern classes of representations]
  We let $V$ be a complex representation of a compact Lie group $G$.
  We let $\rho:G\to U(m)$ be a continuous homomorphism that classifies $V$,
  that is, such that $\rho^*(\nu_m)$ is isomorphic to $V$; here, $m=\dim_\mC(V)$.
  The {\em $k$-th Chern class} of $V$ is
  \[ c_k(V)\ = \ \rho^*(c_k^{(m)})\ \in \ \bMU_G^{2 k}\ .\]
  In particular, $c_0(V)=1$, $c_m(V)=e(V)$ is the Euler class, and $c_k(V)=0$ for $k>m$.
\end{con}

\begin{eg} As an example, we consider the tautological representation $\nu_2$ of $S U(2)$ on $\mC^2$.
  By the general properties of Chern classes we have
  $c_0(\nu_2)=1$, $c_2(\nu_2)=e(\nu_2)$ is the Euler class,
  and $c_k(\nu_2)=0$ for $k\geq 3$. The first Chern class of $\nu_2$ can be rewritten
  by using a double coset formula as follows:
  \begin{align*}
    c_1(\nu_2)\
    &= \ \res^{U(2)}_{S U(2)}(c_1^{(2)}) \ = \ \res^{U(2)}_{S U(2)}(\tr_{U(1,1)}^{U(2)}(e_1\times 1)) \\
    &= \ \tr_T^{S U(2)}(\res^{U(1,1)}_T(e_1\times 1)) \ = \ \tr_T^{S U(2)}(e(\chi)) \ .
  \end{align*}
Here $T=\{
(\begin{smallmatrix} \lambda & 0 \\ 0 & \lambda^{-1} \end{smallmatrix}) \ : \ \lambda\in U(1)
\}$
is the diagonal maximal torus of $S U(2)$, $\chi:T\iso U(1)$ is the character that projects
onto the upper left diagonal entry, and $e(\chi)\in\bMU^2_T$ is its Euler class.
\end{eg}

\begin{con}
  We recall a specific $G$-equivariant $\bMU_G$-module associated
  to a complex representation $V$ of a compact Lie group $G$.
  The construction is known as the {\em stable Koszul complex}
  for the Chern classes $c_1(V),\dots,c_m(V)$, where $m=\dim_\mC(V)$;
  in the notation of Greenlees--May \cite{greenlees-may:completion}
  and La Vecchia \cite{lavecchia}, our $K(G,V)$ would appear as $\Gamma_I(\bMU_G)$,
  where $I=(c_1(V),\dots,c_m(V))$ is the ideal generated by the Chern classes.

  For any equivariant homotopy class $x\in \bMU_G^l$,
  we write $\bMU_G[1/x]$ for the $\bMU_G$-module localization of $\bMU_G$ with $x$ inverted;
  in other words, $\bMU_G[1/x]$ is a homotopy colimit (mapping telescope)
  in the triangulated category of the sequence
\[ \bMU_G\ \xra{-\cdot x} \ \Sigma^l \bMU_G\ \xra{-\cdot x} \Sigma^{2 l}\bMU_G\ \xra{-\cdot x} \ \Sigma^{3 l}\bMU_G\ \xra{-\cdot x} \ \dots \ \ .\]
  We write $K(x)$ for the fiber of the morphism $\bMU_G\to\bMU_G[1/x]$.
  Then we define
  \[ K(G,V)\ = \ K(c_1(V))\sm_{\bMU_G}\dots \sm_{\bMU_G}K(c_m(V))\ . \]
  The smash product of the morphisms $K(c_i(V))\to\bMU_G$ provides a morphism of $G$-equivariant $\bMU_G$-module spectra
  \[ \epsilon_V\ : \ K(G,V)\ \to \ \bMU_G .\]
  By general principles, the module $K(G,V)$
  only depends on the radical of the ideal generated by the classes $c_1(V),\dots,c_m(V)$.
  But more is true: As a consequence of Theorem \ref{thm:completion} below,
  $K(G,V)$ is entirely independent, as a $G$-equivariant $\bMU_G$-module, of the faithful representation $V$.
\end{con}

\begin{prop}\label{prop:characterize K(G,V)}
  Let $V$ be a faithful complex representation of a compact Lie group $G$.
  \begin{enumerate}[\em (i)]
  \item  The morphism
    $\epsilon_V:K(G,V)\to \bMU_G$ is an equivalence of underlying nonequivariant spectra.
  \item For every nontrivial closed subgroup $H$ of $G$, the $H$-geometric fixed point spectrum
    $\Phi^H(K(G,V))$ is trivial.
\end{enumerate}
\end{prop}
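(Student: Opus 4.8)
The plan is to handle both parts through a single computation of how the Chern classes of $V$ restrict to subgroups, followed by the standard localization behaviour of geometric fixed points. For a closed subgroup $H\leq G$ I decompose $V|_H=V^H\oplus V'_H$, where $V'_H=(V^H)^\perp$ is fixed-point-free, i.e. $(V'_H)^H=0$. The summand $V^H$ is a trivial $H$-representation, hence pulled back from the trivial group; since $\res^{U(s)}_e(c_d^{(s)})=c_d^{(d)}|_e=e_d|_e=0$ for $d\geq 1$ (iterate Theorem~\ref{thm:CFC main}(ii) down to $U(d)$ and use that the Euler class of a trivial representation vanishes), all positive-degree Chern classes of $V^H$ vanish. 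The Whitney sum formula for Chern classes of representations, a direct consequence of Theorem~\ref{thm:CFC main}(i), then collapses to
\[ \res^G_H(c_k(V))\ = \ \sum_{d=0}^{k} c_d(V^H)\cdot c_{k-d}(V'_H)\ = \ c_k(V'_H)\ . \]
In particular the top class $c_{\dim V'_H}(V'_H)=e(V'_H)$ is the Euler class of the fixed-point-free representation $V'_H$.

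For part (i) I specialize to $H=e$, so $V'_e=0$ and $\res^G_e(c_k(V))=0$ for all $1\leq k\leq m$. Restriction to the underlying non-equivariant spectrum is exact, strong monoidal, and commutes with the mapping telescope, so the underlying spectrum of $\bMU_G[1/c_k(V)]$ is $\bMU[1/0]\simeq *$; hence the underlying spectrum of each fiber $K(c_k(V))$ is $\bMU$ and each $\epsilon_{c_k(V)}$ is an underlying equivalence. Smashing these equivalences over $\bMU$ shows that $\epsilon_V$ is an equivalence of underlying spectra.

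For part (ii) I fix a non-trivial closed subgroup $H$. Geometric fixed points are exact, strong monoidal, and commute with filtered homotopy colimits, so for any subgroup $K$ the functor $\Phi^K$ carries $\bMU_G[1/x]$ to $(\Phi^K\bMU_G)[1/\Phi^K(x)]$ and the fiber $K(x)$ to the fiber of that localization; if $\Phi^K(x)$ is a unit in $\pi_*(\Phi^K\bMU_G)$, this localization map is an equivalence, so $\Phi^K(K(x))\simeq *$ and therefore $\Phi^K(K(G,V))\simeq *$ because one smash factor is contractible. To see that the $W_G H$-spectrum $\Phi^H(K(G,V))$ is trivial I verify contractibility on each further layer, using $\Phi^{K/H}\circ\Phi^H=\Phi^K$: I show $\Phi^K(K(G,V))\simeq *$ for every $K$ with $H\leq K\leq N_G H$. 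Each such $K$ is non-trivial, so faithfulness of $V$ gives $V^K\neq V$ and hence $V'_K\neq 0$; put $j=\dim_{\mC}V'_K\in\{1,\dots,m\}$. By the displayed identity $\res^G_K(c_j(V))=c_j(V'_K)=e(V'_K)$, and since $(V'_K)^K=0$ the geometric fixed point homomorphism sends this Euler class to a unit. Hence $\Phi^K(K(c_j(V)))\simeq *$, so $\Phi^K(K(G,V))\simeq *$, and letting $K$ vary shows $\Phi^H(K(G,V))$ is trivial.

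The subtlety I expect to be the crux is the residual $W_G H$-action in part (ii): no single Chern class $c_i(V)$ restricts to an invertible geometric fixed point class simultaneously at all intermediate groups $K\supseteq H$, so I cannot isolate one contractible smash factor once and for all. The remedy is to let the relevant class depend on $K$ -- always the top Chern class $c_{\dim V'_K}(V)$ of the fixed-point-free complement -- and to check contractibility separately on every geometric-fixed-point layer. The one external input is the standard fact that $\Phi^K$ inverts the Euler class of any representation with no $K$-fixed vectors; care is needed to apply it in the genuine $W_G H$-equivariant homotopy ring rather than merely on underlying homotopy groups.
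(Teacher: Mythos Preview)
Your argument is correct, and its core---identifying $\res^G_H(c_k(V))$ with the Euler class of the fixed-point-free complement $(V^H)^\perp$, then using that geometric fixed points invert such Euler classes to kill one smash factor---is exactly the paper's proof. The only real difference is your reading of part~(ii): you interpret ``$\Phi^H(K(G,V))$ is trivial'' as triviality of the genuine $W_GH$-spectrum, and accordingly verify contractibility at every intermediate $K$ with $H\leq K\leq N_GH$ via $\Phi^{K/H}\circ\Phi^H\simeq\Phi^K$. The paper instead treats $\Phi^H$ as the composite $\Phi^H\circ\res^G_H$ valued in non-equivariant spectra and applies the argument once, at $H$ itself; this weaker conclusion is all that is needed for Theorem~\ref{thm:completion}, since a $G$-map is an equivalence as soon as each $\Phi^H$ of it is a non-equivariant equivalence. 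So the ``subtlety'' you flag as the crux---that no single Chern class works uniformly over all $K\supseteq H$---is not actually present in the paper's formulation, and your iteration over intermediate subgroups, while correct and yielding a genuinely stronger statement, is extra work.
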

\begin{proof}
  (i) We set $m=\dim_\mC(V)$.
  The Chern classes $c_1(V),\dots,c_m(V)$ belong to the augmentation ideal
  of $\bMU_G^*$, so they restrict to 0 in $\bMU_{\{1\}}^*$, and
  hence the underlying nonequivariant spectrum of $\bMU_G[1/c_i(V)]$ is trivial
  for each $i=1,\dots,m$.
  Hence the morphisms $K(c_i(V))\to \bMU_G$ are underlying nonequivariant
  equivalences for $i=1,\dots,m$.
  So also the morphism $\epsilon_V$ is an  underlying nonequivariant equivalence.

  (ii) We let $H$ be a nontrivial closed subgroup of $G$.
  We set $W=V-V^H$, the orthogonal complement of the $H$-fixed points.
  This is a complex $H$-representation with $W^H=0$; moreover, $W$ is nonzero because
  $H$ acts faithfully on $V$ and $H\ne\{1\}$.  
  For $k=\dim_\mC(W)$ we then have
  \[ e(W) \ = \ c_k(W)\ = \ c_k(W\oplus V^H)\ = \ c_k(\res^G_H(V))\ = \  \res^G_H( c_k(V)) \ ;\]
  the second equation uses the fact that adding a trivial representation
  leaves Chern classes unchanged, by part (ii) of Theorem \ref{thm:CFC main}.

  Since $W^H=0$, the geometric fixed point homomorphism $\Phi^H:\bMU_H^*\to \Phi_H^*(\bMU)$
  sends the Euler class $e(W)  = \res^G_H( c_k(V))$ to an invertible element.
  The functor $\Phi^H\circ \res^G_H$ commutes with inverting elements.
  Since the class $\Phi^H(\res^G_H(c_k(V)))$ is already invertible,
  the localization morphism $\bMU_G\to \bMU_G[1/c_k(V)]$ induces an equivalence on $H$-geometric fixed points.
  Since the functor $\Phi^H\circ\res^G_H$ is exact, it annihilates the fiber $K(c_k(V))$
  of the localization $\bMU_G\to \bMU_G[1/c_k(V)]$.
  The functor $\Phi^H\circ\res^G_H$ is also strong monoidal, in the sense of a natural
  equivalence of nonequivariant spectra
  \[ \Phi^H(X\sm_{\bMU_G}Y) \ \simeq \ \Phi^H(X)\sm_{\Phi^H(\bMU_G)}\Phi^H(Y) \ ,  \]
  for all $G$-equivariant $\bMU_G$-modules $X$ and $Y$.
  Since $K(G,V)$ contains $K(c_k(V))$ as a factor (with respect to $\sm_{\bMU_G}$),
  we conclude that the spectrum $\Phi^H(K(G,V))$ is trivial.
\end{proof}

The following 'completion theorem' is a reformulation of the combined
work of Greenlees--May \cite[Theorem 1.3]{greenlees-may:completion}
and La Vecchia \cite{lavecchia}. It is somewhat more precise in that an unspecified
`sufficiently large' finitely generated ideal of $\bMU_G^*$ is replaced by the
ideal generated by the Chern classes of a faithful $G$-representation.
The proof is immediate from the properties of $K(G,V)$ listed in
Proposition \ref{prop:characterize K(G,V)}.
We emphasize, however, that our proof is just a different way of arranging some arguments from
\cite{greenlees-may:completion} and \cite{lavecchia} while taking advantage of the Chern class formalism.

Since the morphism $\epsilon_V:K(G,V)\to \bMU_G$ is a nonequivariant equivalence
of underlying spectra, the morphism $E G_+\sm \bMU_G\to \bMU_G$ that collapses
the universal space $E G$ to a point admits a unique lift to a morphism
of $G$-equivariant $\bMU_G$-modules $\psi: E G_+\sm \bMU_G\to K(G,V)$ across $\epsilon_V$.

\begin{thm}\label{thm:completion}
  Let $V$ be a faithful complex representation of a compact Lie group $G$.
  Then the morphism
  \[ \psi\ : \ E G_+\sm \bMU_G\ \to\ K(G,V) \]
  is an equivalence of $G$-equivariant $\bMU_G$-module spectra.
\end{thm}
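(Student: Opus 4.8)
The plan is to verify that $\psi$ is an equivalence of genuine $G$-spectra by testing it on geometric fixed points for all closed subgroups of $G$, and to exploit that the forgetful functor from $\bMU_G$-modules to $G$-spectra detects equivalences, so that it suffices to show $\psi$ is an underlying equivalence of $G$-spectra. Concretely, I would invoke the standard detection principle that a morphism of orthogonal $G$-spectra is a stable equivalence precisely when the induced map on $H$-geometric fixed points $\Phi^H(-)$ is a stable equivalence of non-equivariant spectra for every closed subgroup $H\leq G$. The proof then splits into the two cases $H=\{1\}$ and $H\ne\{1\}$, each of which is essentially immediate from Proposition \ref{prop:characterize K(G,V)}.

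For the trivial subgroup, $\Phi^{\{1\}}$ is the underlying non-equivariant spectrum. Since $E G$ is non-equivariantly contractible, the collapse map $E G_+\sm\bMU_G\to\bMU_G$ is an underlying equivalence, and $\epsilon_V$ is an underlying equivalence by Proposition \ref{prop:characterize K(G,V)} (i). Because $\psi$ was defined as the unique $\bMU_G$-linear lift with $\epsilon_V\circ\psi$ equal to the collapse map, the two-out-of-three property forces $\Phi^{\{1\}}(\psi)$ to be an equivalence as well.

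For a non-trivial closed subgroup $H$, I would show that both sides become trivial. On the target, Proposition \ref{prop:characterize K(G,V)} (ii) gives $\Phi^H(K(G,V))\simeq *$ directly. On the source I would use that $\Phi^H$ is strong monoidal together with the identification $\Phi^H(\Sigma^\infty_+ A)\simeq\Sigma^\infty_+(A^H)$ for a $G$-space $A$; since $E G$ is a free $G$-space, its $H$-fixed points are empty for $H\ne\{1\}$, whence $\Phi^H(E G_+\sm\bMU_G)\simeq\Sigma^\infty_+(\emptyset)\sm\Phi^H(\bMU_G)\simeq *$. Thus $\Phi^H(\psi)$ is a map between trivial spectra, hence an equivalence.

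Having checked all closed subgroups, the detection principle shows that $\psi$ is an equivalence of underlying $G$-spectra, and therefore an equivalence of $G$-equivariant $\bMU_G$-modules. I do not expect a genuine obstacle here: the substantive work has already been packaged into Proposition \ref{prop:characterize K(G,V)}, and the only points that require care are the correct invocation of the geometric-fixed-point detection result for compact Lie groups and the monoidal computation of $\Phi^H(E G_+\sm\bMU_G)$, both of which are routine.
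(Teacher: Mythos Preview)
Your argument is correct and is essentially the same as the paper's: both check that $\psi$ is an equivalence on $H$-geometric fixed points for all closed $H\leq G$, handling $H=\{1\}$ via the two-out-of-three property applied to $\epsilon_V\circ\psi$, and $H\ne\{1\}$ by observing that source and target are trivial. You are slightly more explicit than the paper in spelling out why $\Phi^H(E G_+\sm\bMU_G)$ vanishes and in invoking the detection principle, but there is no substantive difference.
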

\begin{proof}
  Because the underlying space of $E G$ is contractible, the composite 
  \[ E G_+\sm \bMU_G\ \xra{\ \psi\ } \ K(G,V)\ \xra{\ \epsilon_V\ }\ \bMU_G \]
  is an equivariant equivalence of underlying nonequivariant spectra.
  Since $\epsilon_V$ is an equivariant equivalence of underlying nonequivariant spectra
  by Proposition \ref{prop:characterize K(G,V)}, so is $\psi$.
  For all nontrivial closed subgroups $H$ of $G$, source and target of $\psi$
  have trivial $H$-geometric fixed points spectra,
  again by Proposition \ref{prop:characterize K(G,V)}. So the morphism $\psi$ induces
  an equivalence on geometric fixed point spectra for all closed subgroup of $G$,
  and it is thus an equivariant equivalence.
\end{proof}

\vspace*{-.1cm}
{\bf Acknowledgments.}
The author is a member of the Hausdorff Center for Mathematics
at the University of Bonn (DFG GZ 2047/1, project ID 390685813).

\vspace*{-.1cm}

\vspace*{-.1cm}

\end{document}